\documentclass[11pt]{amsart}
\usepackage{lmodern}
\usepackage[a4paper,margin=3cm]{geometry}
\usepackage[utf8]{inputenc}
\usepackage[T1]{fontenc}
\usepackage[english]{babel}
\usepackage{graphicx}
\usepackage{xcolor}
\usepackage[all,cmtip]{xy}
\usepackage{cancel}
\usepackage{ulem}
\SelectTips{lu}{11}
\usepackage{setspace}
\usepackage{subfigure,graphics,graphicx,psfrag}
\usepackage{tikz} 
\usepackage{overpic}
\usepackage{srcltx}
\usepackage{amsmath,amsfonts,amssymb,mathrsfs}

\theoremstyle{plain}
\newtheorem{theorem}{Theorem}[section]

\theoremstyle{remark}
\newtheorem{remark}{Remark}

\theoremstyle{definition}
\newtheorem{definition}[theorem]{Definition}

\title{Persistent Leray's spectral sequence}

\author[E.\ L.\ dos Santos]{Edivaldo L.\ dos Santos}
\address{Departamento de Matem\'{a}tica \\
	Federal University of S\~ao Carlos \\
	Rodovia Washington Luiz. km 235 \\
	S\~ao Carlos, SP, Brazil. \textit{E-mail address:} \rm edivaldo@ufscar.br}

    \author[ Telmo A. ]{Telmo I. Acosta Vellozo }
\address{ CENUR Noreste, Universidad de la Rep\'ublica - Uruguay \textit{E-mail address:} \rm telmo.acosta@cur.edu.uy}

\subjclass[2010]{ 18G40, 55T05, 5N31}
\keywords{Persistent Cohomology, Persistence Module, Leray spectral sequence}

\thanks{This work is partially supported by the Projeto Tem\'atico: Topologia Alg\'ebrica,Geom\'etrica e Diferencial, FAPESP Process Number 2022/16455-6. This study was financed in part by the Coordenação de Aperfeiçoamento de Pessoal de Nível Superior - Brasil (CAPES) - Finance Code 001.}

\begin{document}

\maketitle

\noindent{\bf Abstract.}
In this work, we construct a persistent version of the well-known Leray spectral sequence. More precisely, we construct a spectral sequence that computes the persistent cohomology of a space from the persistent cohomology in each open set and its intersections with a covering that is the pre-image under a function of a covering of a known space.

\section{Introduction}

Persistent homology/cohomology is an algebraic method for measuring the topological
features of shapes and functions. In the last 20 years, the research and application of persistent homology/cohomology have been intense; for an introduction  and survey, see \cite{Carlsson}.

\subsection{Persistent Cohomology}

In this section,  we show that the persistent cohomology of a filtered simplicial complex is a graded module over a polynomial ring, as Zomorodian and Carlsson had done for persistent homology in \cite{Zomorodian}.

Given a filtered complex %indexed in decreasing order
$$\emptyset\subseteq K_n\subseteq K_{n-1}\subseteq\cdots\subseteq K_1\subseteq K_0=K,$$
 for generality, we let $K_i=K$ for all $i\leq 0$ and we let $K_i=\emptyset$ for all $i> n$. We call $K$ a \textit{filtered complex}. The $i$-th complex $K_i$ has associated the coboundary operator $d_i^k$ and groups $C_i^k,\ Z_i^k,\ B_i^k$ and $H_i^k$ for $i,k\geq 0$, and we have the restriction homomorphisms $\nu_i^k:H_i^k\longrightarrow H_{i+1}^k$. Then we have the sequence
$$\xymatrix{\cdots\ar[r]^1& H_{-1}^\ast\ar[r]^1 &H_0^\ast \ar[r]^{\nu_0^\ast} & H_1^\ast\ar[r]^{\nu_1^\ast} &\cdots\ar[r]^{\nu_{n-2}^\ast}& H_{n-1}^\ast\ar[r]^{\nu_{n-1}^\ast}& H_n^\ast\ar[r]&0}.$$

\begin{definition}
The $p$-persistent $k$-th cohomology group is
$$H_{i,p}^k=Z_i^k/(B_{i+p}^k\cap Z_i^k).$$
\end{definition}

\begin{remark}
    
 If we define $\nu_{i,p}^\ast$ as the restriction homomorphism  from $H_i^\ast$ to $H_{i+p}^\ast$, then $H_{i,p}^\ast$ is the image of $\nu_{i,p}^\ast$.
\end{remark}

\begin{definition}
A persistence module $\mathcal{M}$ is a family of $R$-modules $M_i$, together with homomorphisms $\varphi_i:M_i\longrightarrow M_{i+1}$.
\end{definition}

For example, the sequence of cohomology of the filtration is a persistence module, where $\varphi_i$ are the restrictions.

\begin{definition}
A persistence module $\{M_i,\varphi_i\}$ is of finite type if each component module is a finitely generated $R$-module, and if the maps $\varphi_i$ are isomorphisms for $i\geq m$ and $ i\leq n$ for some integers $m$ and $n$.
\end{definition}

\subsection{Correspondence}

Suppose we have a persistence module $\mathcal{M}=\{M_i,\varphi_i\}_{i\in \mathbb{Z}}$ over a ring $R$. We now equip $R[t]$ with the standard grading and define a graded module over $R[t]$ by
$$\alpha(\mathcal{M})= \bigoplus_{i\in \mathbb{Z}} M_i,$$
where the $R$-module structure is simply the sum of the structures on the individual components, and where the action of $t$ is given by
$$t\cdot (\dots,m_{-1},m_0,m_1,m_2,\dots)=(\dots,m_{-1},\varphi_0(m_0),\varphi_1(m_1),\varphi_2(m_2),\dots).$$
That is, $t$ simply shifts elements of the module up in the graduation.\\
\par In the case of the persistence module coming from a cohomology of the filtration of a complex $K$, we use the following notation $PH^\ast(K)$ to $\alpha(K)$ and we can write
$$PH^\ast(K)= \bigoplus_{i\in \mathbb{Z}} H_{i}^\ast.$$

\begin{theorem}
The correspondence $\alpha$ defines an equivalence of categories between the category of persistence modules of finite type over $R$ and the category of finitely generated graded modules over $R[t]$.
\end{theorem}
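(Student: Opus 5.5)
The plan is to follow Zomorodian and Carlsson \cite{Zomorodian}: build an explicit inverse functor $\beta$ and check that $\beta\circ\alpha$ and $\alpha\circ\beta$ are naturally isomorphic to the identities. Two conventions are needed, and I will fix them at the start. First, as in the filtered-complex example (where $K_i=\emptyset$ for $i>n$ and $H_i^\ast$ is constant for $i\le 0$), I normalize a persistence module of finite type so that $M_i=0$ for $i<0$, shifting indices if necessary. Without this, a module with $M_i\cong M\neq 0$ for all $i\le n$ has $\alpha(\mathcal M)$ nonzero in infinitely many negative degrees, and such a module is not finitely generated, so the normalization is genuinely needed for the statement. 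Second, I take $R$ Noetherian (a field or PID in the applications), since this is used in the stabilization step.

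On morphisms, a morphism of persistence modules $\{f_i:M_i\to N_i\}$ with $f_{i+1}\varphi_i=\psi_i f_i$ is sent to $\bigoplus f_i$. The compatibility condition says precisely that $\bigoplus f_i$ commutes with $t$, so it is a degree-preserving $R[t]$-linear map. Conversely, every graded $R[t]$-homomorphism splits into degree components that commute with the structure maps. Hence $\alpha$ is a functor and is fully faithful.

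To see that $\alpha(\mathcal M)$ is finitely generated, let $m$ be the index beyond which the $\varphi_i$ are isomorphisms. Then $\bigcup_{0\le i\le m}\{\text{generators of }M_i\}$ generates $\alpha(\mathcal M)$ over $R[t]$: any element of $M_j$ with $j>m$ is $t^{j-m}$ applied to an element of $M_m$.

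For essential surjectivity, take a finitely generated graded $R[t]$-module $N=\bigoplus N_i$ and set $\beta(N)=\{N_i,\ t\cdot:N_i\to N_{i+1}\}$. The properties of $\beta(N)$ are checked as follows.
\begin{itemize}
\item Choose finitely many homogeneous generators $g_1,\dots,g_s$ of degrees $d_j$. Then $N_i$ is spanned over $R$ by the finitely many elements $t^{\,i-d_j}g_j$ with $d_j\le i$, so each $N_i$ is a finitely generated $R$-module.
\item $N_i=0$ for $i<\min_j d_j$, which after a shift matches the normalization.
\item Multiplication by $t$ is surjective $N_i\to N_{i+1}$ once $i\ge\max_j d_j$.
\end{itemize}
By construction $\alpha\beta(N)=N$ and $\beta\alpha(\mathcal M)=\mathcal M$ on the nose, so only the finite-type condition remains.

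The main obstacle is showing that $t$ is eventually injective. My approach is to consider $K=\ker(t:N\to N)$, a graded submodule of $N$. Since $R[t]$ is Noetherian by Hilbert's basis theorem and $N$ is finitely generated, $K$ is finitely generated, hence generated in degrees $\le$ some $e$. However, $tK=0$, so $K=\bigoplus_{i\le e}K_i$, and therefore $t$ is injective on $N_i$ for $i>e$. Combining this with the surjectivity above, $t:N_i\to N_{i+1}$ is an isomorphism for $i\ge \max(e+1,\max_j d_j)$. The lower-range isomorphism condition holds trivially because $N_i=0$ in low degrees. This completes the equivalence.
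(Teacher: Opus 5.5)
Your argument is correct, up to one small fix described below. It takes a genuinely different route from the paper, which gives no argument beyond the remark that the result ``follows from Artin--Rees theory'' (inherited from Zomorodian--Carlsson).

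\textbf{What your proof does instead.} You write down the inverse functor $\beta$ explicitly and get full faithfulness by splitting graded maps into degree components. The one substantive point, eventual injectivity of $t$, comes from Noetherianity of $R[t]$ applied to the graded submodule $\ker t$. Since $t$ kills this submodule, it is concentrated in degrees $\le e$. So the Hilbert basis theorem alone suffices, and no Artin--Rees lemma is needed.

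\textbf{Hypotheses the paper omits.} Your proof also brings out two hypotheses missing from the paper's statement, and both are necessary.
\begin{itemize}
\item Boundedness below. The paper's own $PH^\ast(K)$ has $H_i^\ast=H^\ast(K)\neq 0$ for all $i\le 0$, so it is not finitely generated over $R[t]$.
\item Noetherianity of $R$. Take $R=k[x_1,x_2,\dots]$ and $N=R[t]/(x_1t,\,x_2t^2,\,x_3t^3,\dots)$. Then $N_i\cong R/(x_1,\dots,x_i)$, and $t\colon N_i\to N_{i+1}$ is never injective. So $\beta(N)$ is not of finite type, although $N$ is cyclic.
\end{itemize}

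\textbf{The fix.} You normalize persistence modules so that $M_i=0$ for $i<0$. On the other side, however, you keep all finitely generated graded modules, and say that $N_i=0$ for $i<\min_j d_j$ ``after a shift matches the normalization.'' A shift $N\mapsto N(k)$ is not an isomorphism in the category of graded modules with degree-preserving maps. For example, $R[t]$ with its generator placed in degree $-1$ is not isomorphic to any $\alpha(\mathcal M)$ with $M_i=0$ for $i<0$. So essential surjectivity fails as you have written it.

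Either of two changes repairs this:
\begin{itemize}
\item require only $M_i=0$ for $i\ll 0$ on the persistence side; or
\item restrict to non-negatively graded $R[t]$-modules, which is Zomorodian--Carlsson's actual formulation.
\end{itemize}
With either choice your argument goes through verbatim. In the bounded-below version, the finite-generation step takes generators of $M_i$ for $i$ between the lowest nonzero degree and $m$.
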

The proof follows from  Artin-Rees theory in commutative algebra \cite{Eisenbud}.

\section{Leray's theorem}

\'Alvaro Torras Casas, in his doctoral thesis \cite{Torras}, explored the Mayer-Vietoris spectral sequence associated with filtered covers on filtered complexes. From this study, he developed an algorithm to compute persistent homology.

In this section, we will explore Leray's spectral sequence associated with filtered covers on filtered complexes. The difference between Mayer-Vietoris spectral sequences and Leray's spectral sequence is that, while Mayer-Vietoris spectral sequence works on the homology of the open cover of the space, Leray's spectral sequence works on the cohomology of preimage under a map of an open cover of a known space. There are at least two advantages to working with Leray's spectral sequence instead of the Mayer-Vietoris spectral sequence. The first is that the algorithms to calculate persistent cohomology are faster, and the second is that it works with open covers of known spaces.

Consider $f:X\longrightarrow Y$ a map from one space to another, we want to study how the cohomology groups of $X$ relate to those of $Y$. Let $\mathcal{U}$ be any cover of $Y$ such that $f^{-1}\mathcal{U}$ is a locally finite  cover of $X$. By Generalized Mayer-Vietoris principle 
$$H_d^\ast(X) \simeq H_D\{C_\delta^\ast(f^{-1}\mathcal{U},C_d^\ast)\},$$
where the double complex $C_\delta^\ast(f^{-1}\mathcal{U},C_d^\ast)$ consists of the $p$-cochains of the cover $\mathcal{U}$ with values in the $q$-cochains, the horizontal maps is the difference operators $\delta$ and the vertical ones are the coboundary morphisms $d$. The differential operator $D$ is given by $D=\delta+ (-1)^pd$ in the single complex $C^n(f^{-1}\mathcal{U})=\bigoplus_{p+q=n}C_\delta^\ast(f^{-1}\mathcal{U},C_d^\ast)$.\\
By spectral sequence of double complex, if $K$ is the double complex $C_\delta^\ast(f^{-1}\mathcal{U},C_d^\ast)$ on $X$, then the spectral sequence of $K$ converges to 
$$ H_D\{C_\delta^\ast (f^{-1}\mathcal{U},C_d^\ast)\}$$
and
$$E_2^{p,q}= H_\delta^{p}H_d^q\{C_\delta^\ast(f^{-1}\mathcal{U}, C_d^\ast)\}.$$%=H_\delta^p(U_{\alpha_0\dots\alpha_p},H_d^q(f^{-1}(U_{\alpha_0\dots\alpha_p}))). $$
Thus we have the next version of Leray's theorem.
\begin{theorem}[Leray's Theorem]\label{leray}
Let $X$ and $Y$ be spaces, let $\mathcal{U}$ be a cover of $Y$ such that $f^{-1}\mathcal{U}$ is a cover for $X$ locally finite and let $f:X\longrightarrow Y$ be a map. Then there is a spectral sequence converging to $H_d^\ast(X)$ and $E_2^{p,q}=H_\delta^{p}H_d^q\{C_\delta^\ast(f^{-1}\mathcal{U}, C_d^\ast)\}$.
\end{theorem}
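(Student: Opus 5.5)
The proof combines the generalized Mayer--Vietoris principle for the pulled-back cover with the spectral sequence of a double complex filtered by the \v{C}ech degree $p$. First fix the double complex
$$K^{p,q}=C^p_\delta(f^{-1}\mathcal{U},C^q_d)=\prod_{\alpha_0<\cdots<\alpha_p} C^q\bigl(f^{-1}U_{\alpha_0}\cap\cdots\cap f^{-1}U_{\alpha_p}\bigr),$$
where each intersection equals $f^{-1}(U_{\alpha_0\cdots\alpha_p})$ because preimages commute with intersections. The horizontal map $\delta$ is the alternating sum of restrictions and the vertical map is $d$. One checks $\delta d=d\delta$, since restriction commutes with the coboundary. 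Hence $D=\delta+(-1)^pd$ squares to zero on the total complex $C^n(f^{-1}\mathcal{U})=\bigoplus_{p+q=n}K^{p,q}$.

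\textbf{Step 1: identify the total cohomology with $H^\ast_d(X)$.} Augment the rows by the restriction $r:C^q(X)\to \prod_{\alpha}C^q(f^{-1}U_\alpha)$. The key claim is that each augmented row
$$0\to C^q(X)\xrightarrow{r} \prod_{\alpha_0} C^q(f^{-1}U_{\alpha_0})\xrightarrow{\delta}\prod_{\alpha_0<\alpha_1}C^q(f^{-1}U_{\alpha_0\alpha_1})\xrightarrow{\delta}\cdots$$
is exact; this is the generalized Mayer--Vietoris principle. Exactness is proved with a partition of unity $\{\rho_\alpha\}$ subordinate to $f^{-1}\mathcal{U}$, which exists because the cover is locally finite. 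For the simplicial cochains of the filtered complex, the analogue is a combinatorial contracting homotopy obtained by choosing, for each simplex, a cover element containing it, or equivalently by working with $\mathcal{U}$-small cochains. The homotopy is $(K\omega)_{\alpha_0\cdots\alpha_{p-1}}=\sum_\alpha \rho_\alpha\omega_{\alpha\alpha_0\cdots\alpha_{p-1}}$, and it satisfies $\delta K+K\delta=1$. A standard diagram chase (the tic-tac-toe lemma) then shows that $r$ induces a quasi-isomorphism $(C^\ast(X),d)\to(C^\ast(f^{-1}\mathcal{U}),D)$. This gives the isomorphism $H_d^\ast(X)\simeq H_D\{C_\delta^\ast(f^{-1}\mathcal{U},C_d^\ast)\}$ stated before the theorem.

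\textbf{Step 2: the spectral sequence.} Filter the total complex by columns, $F^sC^n=\bigoplus_{p\ge s}K^{p,n-p}$. The differential $D$ preserves this filtration. Since $K^{p,q}=0$ for $p<0$ or $q<0$, the filtration is bounded in each total degree $n$, because only $0\le p\le n$ contribute. The spectral sequence of a filtered complex therefore converges to $H_D$, and by Step 1 the limit is $H^\ast_d(X)$. The pages are computed as follows:
\begin{itemize}
\item $E_0^{p,q}=K^{p,q}$ with $d_0$ induced by $D$, which is $\pm d$ (vertical);
\item $E_1^{p,q}=H_d^q(K^{p,\ast})=\prod H^q(f^{-1}U_{\alpha_0\cdots\alpha_p})$;
\item $d_1$ is induced by $\delta$, so $E_2^{p,q}=H^p_\delta H^q_d\{C_\delta^\ast(f^{-1}\mathcal{U},C_d^\ast)\}$, as claimed.
\end{itemize}

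The main obstacle is Step 1, namely exactness of the augmented rows for an arbitrary locally finite cover. For singular or simplicial cochains, $r$ is not surjective onto the kernel of $\delta$ unless one passes to $f^{-1}\mathcal{U}$-small chains and invokes the subdivision theorem. I would first prove exactness for the complex of $\mathcal{U}$-small cochains, using the explicit homotopy $K$. I would then use the fact that the inclusion of small chains into all chains induces an isomorphism in cohomology. After that the argument is formal homological algebra. The remaining verification is that $d_1$ is exactly the map induced by $\delta$, up to the sign $(-1)^p$, which does not change the cohomology.
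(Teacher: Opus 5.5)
Your proposal is correct and follows the same route as the paper: the generalized Mayer--Vietoris principle identifies $H_D\{C_\delta^\ast(f^{-1}\mathcal{U},C_d^\ast)\}$ with $H_d^\ast(X)$, and the spectral sequence of the double complex, filtered by \v{C}ech degree $p$, has $E_1=H_d$ and $E_2=H_\delta H_d$. The only difference is that the paper simply cites the Mayer--Vietoris principle, whereas you sketch its proof for cochains: you correctly replace $C^q(X)$ by $f^{-1}\mathcal{U}$-small cochains with the combinatorial homotopy (rather than a real-valued partition of unity, which is not available for cochains with coefficients in an arbitrary ring $R$), and then invoke the small-simplices theorem, which tacitly requires the cover to be open.
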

Another way to prove the Theorem \ref{leray} is to take the vertical filtration, since the construction was done using horizontal filtration and the page two of spectral sequence of double complex to vertical filtration stabilizes.

Note that $H_d^q(f^{-1}\mathcal{U})$ is a presheaf on $Y$, we will use the notation $\mathscr{H}^q$ for this presheaf. Then in the Leray's theorem, $E_2^{p,q}=H_\delta^p(\mathcal{U},\mathscr{H}^q)$ is the \v Cech cohomology of the cover $\mathcal{U}$ with values in $\mathscr{H}$.\\

We are now in a position to state the main theorem of this section.

\begin{theorem}[Leray's Theorem for Persistence Module]\label{Leray teo pm}
Let $X$ and $Y$ be topological spaces, let $f:X\rightarrow Y$ be a map, let $\mathcal{U}$ be a cover of $Y$ such that $f^{-1}\mathcal{U}$ is a cover for $X$ locally finite and a filtration $X=K_0\supseteq K_1\supseteq\cdots\supseteq K_N=\emptyset$. Then there is a spectral sequence converging to $PH^\ast(X)$ and $E_2^{p,q}=H_\delta^p(\mathcal{U}, PH^q(f^{-1}\mathcal{U}))$.
\begin{proof}

 Let $X=K_0\supseteq K_1\supseteq\cdots\supseteq K_N=\emptyset$ be a filtration of $X$, then for the presheaf ${H}^q_d(K_i\cap f^{-1}(U))$, we will use the notation $\mathscr{H}_i^q$ and the notation $E_{2,i}^{p,q}$ for $H_\delta^p(\mathcal{U},\mathscr{H}_i^q)$. As the Leray's theorem is true for the restriction of the map $f$ to each subspace $K_i$, we have a spectral sequence converging to $H_d^\ast(K_i)$ whose page $E_2$ is $E_{2,i}^{p,q}$. If we consider $\bigoplus_i E_{2,i}^{p,q}=\bigoplus_i H_\delta^p(\mathcal{U},\mathscr{H}_i^q) =H_\delta^p (\mathcal{U},\bigoplus_i \mathscr{H}_i^q)$. For the other side the direct limit commute with the direct sum, then we have a spectral sequence converging to $\bigoplus_i H_d^\ast(K_i)=PH^\ast(X)$ whose page $E_2$ is $H_\delta^p(\mathcal{U},\bigoplus_i\mathscr{H}_i^q)=H_\delta^p(\mathcal{U}, PH^q(f^{-1}\mathcal{U}))$.\newline\newline
Now we will to show that the restriction homomorphism $\eta_i^\ast :C_i^\ast\longrightarrow C_{i+1}^\ast$ induces a homomorphism on the direct sum of Leray's sequence of filtration, for this we need to show:
\begin{enumerate}
    \item $d\circ \eta_i^q =\eta_i^{q+1}\circ d$. That is the same that next diagram is commutative 
    $$\xymatrix{C_{i+1}^q(f^{-1}(U_{\alpha_0\dots\alpha_p}))\ar[r]^d & C_{i+1}^{q+1}(f^{-1}(U_{\alpha_0\dots\alpha_p})) \\ C_i^q(f^{-1}(U_{\alpha_0\dots\alpha_p}))\ar[u]^{\eta_i^q}\ar[r]^d & C_i^{q+1}(f^{-1}(U_{\alpha_0\dots\alpha_p})).\ar[u]^{\eta_i^{q+1}}} $$
    Let $\tau\in C_i^q(f^{-1}(U_{\alpha_0\dots\alpha_p}))$ and $z=(z_0,\dots ,z_{q+1})\in C_{q+1}^{i+1}(f^{-1}(U_{\alpha_0\dots\alpha_p}))$, then
    $$\eta_i^{q+1}\circ d(\tau)(z)=d(\tau)(\iota_{q+1}^{i+1}(z)),$$
    where $\iota_{q+1}^{i+1}: C_{q+1}^{i+1}(f^{-1}(U_{\alpha_0\dots\alpha_p}))\longrightarrow C_{q+1}^i(f^{-1}(U_{\alpha_0\dots\alpha_p}))$ is the inclusion, then
    $$\eta_i^{q+1}\circ d(\tau)(z)=\sum_{i=0}^p (-1)^i \tau\iota_{q+1}^{i+1}(z_0,\dots,\hat{z}_i,\dots,z_{q+1}), $$
    and for the other side
    $$d\circ\eta_i^q(\tau)(z)=\tau\iota_q^{i+1}(\sum_{i=0}^p (-1)^i(z_0,\dots,\hat{z}_i,\dots,z_{q+1}))=\sum_{i=0}^p(-1)^i \tau\iota_q^{i+1}(z_0,\dots,\hat{z}_i,\dots,z_{q+1}). $$
    Then $d\circ\eta_i^q =\eta_i^{q+1}\circ d$.
    \item $\delta\circ\eta_i^q = \eta_i^{q}\circ \delta$. That is the same that next diagram is commutative
    $$\xymatrix{\prod_{\alpha_0<\dots<\alpha_p}C_{i+1}^q(f^{-1}(U_{\alpha_0\dots\alpha_p}))\ar[r]^\delta & \prod_{\alpha_0<\dots<\alpha_{p+1}}C_{i+1}^q(f^{-1}(U_{\alpha_0\dots\alpha_{p+1}})) \\ \prod_{\alpha_0<\dots<\alpha_p}C_i^q(f^{-1}(U_{\alpha_0\dots\alpha_p}))\ar[u]^{\eta_i^q}\ar[r]^\delta & \prod_{\alpha_0<\dots<\alpha_{p+1}}C_i^q(f^{-1}(U_{\alpha_0\dots\alpha_{p+1}})).\ar[u]^{\eta_i^q}} $$
    Let $\tau\in \prod_{\alpha_0<\dots<\alpha_p}C_i^q(f^{-1}(U_{\alpha_0\dots\alpha_p}))$ and $z\in C_i^q(f^{-1}(U_{\alpha_0\dots\alpha_{p+1}}))$, then
    $$\delta\circ\eta_i^q(\tau)(z)= \sum_{i=0}^{p+1}(-1)^i(\eta_i^q(\tau))_{\alpha_0\dots\hat{\alpha}_i\dots\alpha_{p+1}}(z)= \sum_{i=0}^{p+1}(-1)^i (\tau \iota_q^{i+1})_{\alpha_0\dots\hat{\alpha}_i\dots\alpha_{p+1}}(z),$$
    and for the other side
    $$\eta_i^q\circ \delta(\tau)(z)=\delta(\tau)(\iota_q^{i+1}(z))=\sum_{i=0}^{p+1} (-1)^i \tau_{\alpha_0\dots\hat{\alpha}_i\dots\alpha_{p+1}}(\iota_q^{i+1}(z)) ,$$
    as $(\tau\iota_q^{i+1})_{\alpha_0\dots\hat{\alpha}_i\dots\alpha_{p+1}}(z)= \tau_{\alpha_0\dots\hat{\alpha}_i\dots\alpha_{p+1}}(\iota_q^{i+1}(z)$, we have $\delta\circ\eta_i^q= \eta_i^q\circ \delta$.
    \item $\eta$ respects the filtration (horizontal filtration of the double complex). This is true because the restriction acting in the index of the summation, not in index $p$ or $q$. 
\end{enumerate}
We will call $\eta_{r,i}^{p,q}:E_{r,i}^{p,q}\rightarrow E_{r,i+1}^{p,q}$ the homomorphism  induced by $\eta_i^\ast$ and we define $$\eta_{r,i}^n:\bigoplus_{p+q=n}E_{r,i}^{p,q}\rightarrow \bigoplus_{p+q=n} E_{r,i+1}^{p,q}$$ as $\eta_{r,i}^n= \oplus_{p+q=n}\eta_{r,i}^{p,q}$. Now we can define an action on the spectral sequence $\bigoplus_{p+q=n}\bigoplus_i E_{2,i}^{p,q}=\bigoplus_{p+q=n}H_\delta^p(\mathcal{U},\bigoplus_i\mathscr{H}_i^q)$
given by
$$t(\dots ,m_{r,0}^{n},m_{r,1}^{n},m_{r,2}^{n}\dots) =(\dots ,\eta_{r,0}^{n}(m_{r,0}^{n}),\eta_{r,1}^{n}(m_{r,1}^{n}),\eta_{r,2}^{n}(m_{r,2}^{n}),\dots). $$
where $m_{r,i}^n=\oplus_{p+q=n}m_{r,i}^{p,q}$.

Then this action defines an action on the limit of the spectral sequence, the limit is $\bigoplus_i H^\ast(K_i)$ and we have that the induced homomorphism is isomorphic to the restriction $\eta_i^\ast:H_d^\ast(K_i)\longrightarrow H_d^\ast(K_{i+1})$ taking the vertical filtration on the double complex. Then we conclude that the spectral sequence $\bigoplus_i E_{2,i}^{p,q}$ converges to $\alpha(\{H_d^\ast(K_i),\eta_i^\ast\})=PH^\ast(X)$, which completes the proof of the theorem.
\end{proof}
\end{theorem}


\addcontentsline{toc}{chapter}{Bibliography}

\begin{thebibliography}{25}\singlespace
 

\bibitem{Bauer} BAUER, U.; KERBER, M.; REININGHAUS, J. \textbf{Distributed computation of persistent homology.} Society for Industrial and Applied Mathematics, 2014, p. 31-38.


\bibitem{Bott} BOTT, R.; TU, L. W. \textbf{Differential forms in algebraic topology.} Springer-Verlag New York, 1982.



\bibitem{Marco} DE FREITAS CONTESSOTO, M. A. \textbf{Some Persistent Cohomology Invariants and an Axiomatic Version of Persistent Homology.} PH. D. Math. Thesis, UNESP, 2021.



\bibitem{Edelsbrunner} EDELSBRUNNER, H.; HARER, J. \textbf{Computational Topology, An Introduction.} Amer. Math. Soc., 2010.

\bibitem{Eisenbud} EISENBUD, D. \textbf{Commutative Algebra with a Wiew Toward Algebraic Theory.} Springer New York, 1995.



\bibitem{Carlsson} CARLSSON, G. \textbf{Topology and data} Bull. Amer. Math., v. 46, n. 2, 2009, p. 249-274.



\bibitem{Torras} TORRAS, A. \textbf{Persistence Spectral Sequences.} A thesis submitted for the degree of Doctor of Philosophy, Cardiff University, 2022.


\bibitem{Zomorodian} ZOMORODIAN, A.; CARLSSON, G.  \textbf{Computing Persistent Homology.} Discrete and Computational Geometry, v. 33, 2005, p. 249-274.

\end{thebibliography}
\end{document}